\documentclass[a4paper,twoside,12pt]{article}
\usepackage{amssymb,amsfonts,amsmath,amsthm,latexsym}
\usepackage[pdftex,bookmarks,colorlinks=false]{hyperref}
\usepackage[hmargin=1.2in,vmargin=1.2in]{geometry}
\usepackage[auth-sc-lg,affil-sl]{authblk}
\usepackage{graphicx,caption,subcaption}
\setcounter{Maxaffil}{4}

\usepackage[mathscr]{euscript}
\setlength{\parindent}{1.5em}
\setlength{\parskip}{0.25em}

\pagestyle{myheadings}
\thispagestyle{empty}
\markboth {\hspace*{-9mm} \centerline{\footnotesize 
		A Study on Edge-Set Graphs of Certain Graphs}
}
{ \centerline {\footnotesize 
		Johan Kok, N. K. Sudev and K. P. Chithra.  
	} \hspace*{-9mm}}

\newtheorem{thm}{Theorem}[section]

\newtheorem{defn}[thm]{Definition}

\newtheorem{prop}[thm]{Proposition}
\newtheorem{rem}[thm]{Remark}

\newtheorem*{ill}{Illustration}

\numberwithin{equation}{section}

\def\ni{\noindent}
\def\N{\mathbb{N}}
\def\G{\mathscr{G}}
\def\cS{\mathcal{S}}
\def\cE{\mathcal{E}}
\def\cP{\mathcal{P}}
\def\fE{\mathfrak{E}}
\def\fS{\mathfrak{S}}
\def\fM{\mathfrak{M}}

\title{\textbf{\sc  A Study on Edge-Set Graphs of Certain Graphs}}

\author{Johan Kok}
\affil{\small Tshwane Metropolitan Police Department\\ City of Tshwane, Republic of South Africa \\ {\tt kokkiek2@tshwane.gov.za}}

\author{N. K. Sudev}
\affil{\small Department of Mathematics\\ Vidya Academy of Science \& Technology \\ Thalakkottukara, Thrissur - 680501, India.\\ {\tt sudevnk@gmail.com}}

\author{K. P. Chithra}
\affil{\small Naduvath Mana, Nandikkara \\ Thrissur - 680301, India.\\ {\tt chithrasudev@gmail.com}}

\date{}

\begin{document}
\maketitle

\begin{abstract}
Let $G(V,E)$ simple connected graph, with $|E|=\epsilon$. In this paper, we define an edge-set graph  $\G_G$ constructed from the graph $G$ such that any vertex $v_{s,i}$ of $\G_G$ corresponds to the $i$-th $s$-element subset of $E(G)$ and any two vertices $v_{s,i}, v_{k,m}$ of $\G_G$ are adjacent if and only if there is at least one edge in the edge-subset corresponding to $v_{s,i}$ which is adjacent to at least one edge in the edge-subset corresponding to $v_{k,m}$, where $s,k$ are positive integers. It can be noted that the  edge-set graph $\G_G$ of a graph $G$ is dependent on both the structure of $G$ as well as the number of edges $\epsilon$. We also discuss the characteristics and properties of the edge-set graphs corresponding to certain standard graphs.
\end{abstract}

\ni \textbf{Keywords:} Edge-set graph, total edge-degree of a graph, edge-degree of vertex, connected edge dominating set, artificial edge-set element.

\vspace{0.2cm}

\ni\textbf{Mathematics Subject Classification:} 05C07, 05C38, 05C78

\section{Introduction}

For general notation and concepts in graph theory, we refer to \cite{BM1}, \cite{CL1}, \cite{GY1}, \cite{DBW}. All graphs mentioned in this paper are simple, connected finite and undirected graphs unless mentioned otherwise. 

Let $\N$ denotes the set of all positive integers. Then, the notion of a set-graph has been introduced in \cite{KC2S} associated with a given non-empty set, $A^{(n)} = \{a_1, a_2, a_3, \ldots, a_n\}, n \in \N$ as follows.

\begin{defn}\label{D-SG}{\rm 
\cite{KC2S} Let $A^{(n)} = \{a_1, a_2, a_3, \ldots, a_n\}, n\in \N$ be a non-empty set and the $i$-th $s$-element subset of $A^{(n)}$ be denoted by $A_{s,i}^{(n)}$.  Now consider $\cS = \{ A_{s,i}^{(n)}: A_{s,i}^{(n)} \subseteq A^{(n)}, A_{s,i}^{(n)} \ne \emptyset\}$. The \textit{set-graph} corresponding to set $A^{(n)}$, denoted $G_{A^{(n)}}$, is defined to be the graph with $V(G_{A^{(n)}}) = \{v_{s,i}: A_{s,i}^{(n)} \in \cS\}$ and $E(G_{A^{(n)}}) = \{v_{s,i}v_{t,j}:~ A_{s,i}^{(n)}\cap A_{t,j}^{(n)}\ne \emptyset\}$, where $s\ne t~ \text{or}~ i\ne j$. }
\end{defn} 

A detailed study on set-graphs has also been done in \cite{KC2S}.  Some of the major results proved in that paper, which are relevant in our present study, are mentioned in this section.

\begin{thm}\label{Thm1.2}
{\rm \cite{KC2S}} Let $G_{A^{(n)}}$ be a set-graph. Then, the vertices $v_{s,i}, v_{s,j}$ of $G_{A^{(n)}}$, corresponding to subsets $A^{(n)}_{s,i}$ and $A^{(n)}_{s,j}$ in $\cS$ of equal cardinality, have the same degree in $G_{A^{(n)}}$.
\end{thm}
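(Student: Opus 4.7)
The plan is to show that $\deg(v_{s,i})$ in $G_{A^{(n)}}$ depends only on $s$ and $n$, not on the particular $s$-element subset $A^{(n)}_{s,i}$. I would carry this out by direct enumeration of the neighbours of $v_{s,i}$, which additionally yields an explicit closed-form degree formula.

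First, I would fix $A^{(n)}_{s,i}$ and, using the adjacency rule of Definition~\ref{D-SG}, observe that $v_{t,k}$ is a neighbour of $v_{s,i}$ if and only if $A^{(n)}_{t,k}\cap A^{(n)}_{s,i}\ne\emptyset$ and $(t,k)\ne(s,i)$. Equivalently, the non-neighbours of $v_{s,i}$ (among the $2^n-1$ vertices of $G_{A^{(n)}}$ other than $v_{s,i}$ itself) correspond exactly to the non-empty subsets of the complement $A^{(n)}\setminus A^{(n)}_{s,i}$, of which there are $2^{n-s}-1$. The resulting expression
\[
\deg(v_{s,i}) \;=\; (2^n-1) - 1 - (2^{n-s}-1) \;=\; 2^n - 2^{n-s} - 1
\]
depends only on $s$ and $n$, and so $\deg(v_{s,i})=\deg(v_{s,j})$ for every pair of $s$-element subsets.

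As a conceptually cleaner alternative I would also sketch a symmetry argument: any bijection $\pi\colon A^{(n)}\to A^{(n)}$ that sends $A^{(n)}_{s,i}$ onto $A^{(n)}_{s,j}$ (such a $\pi$ exists precisely because the two subsets have equal cardinality) permutes the family $\cS$ and preserves the intersection-based adjacency, so it induces an automorphism of $G_{A^{(n)}}$ that carries $v_{s,i}$ to $v_{s,j}$. Vertices lying in a common orbit of $\mathrm{Aut}(G_{A^{(n)}})$ automatically have equal degree, giving the conclusion without any counting.

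The only subtlety I anticipate is the bookkeeping around the clause ``$s\ne t$ or $i\ne j$'' in Definition~\ref{D-SG}: one must be careful to exclude $v_{s,i}$ itself from its own neighbourhood (this is the ``$-1$'' above) while still including every other vertex whose subset meets $A^{(n)}_{s,i}$. Beyond this accounting, the argument is routine and no case analysis is required.
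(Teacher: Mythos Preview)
Your proposal is correct; both the complement-counting argument and the automorphism argument are valid and the closed form $2^{n}-2^{n-s}-1$ is right (it agrees with the bounds in Theorem~\ref{T-DSG2} at $s=1$ and $s=n$).

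There is nothing in this paper to compare against: Theorem~\ref{Thm1.2} is quoted from \cite{KC2S} without proof, as background. The only related computation the paper records is the inclusion--exclusion degree formula of Theorem~\ref{T-DSG1}, which counts neighbours directly as a signed sum over the families $\cS_j$; your complement count is the sharper and more transparent version of that, and your symmetry argument bypasses all counting by exploiting the transitive action of $S_n$ on $s$-subsets. Either of your routes would serve as a self-contained replacement for the citation.
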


In other words, we can say that the vertices of a set-graph $G_{A^{(n)}}$ corresponding to the equivalent subsets (subsets having same cardinality) of a non-empty set $A^{(n)}$ have the same degree.

The degree of a vertex in a set-graph has been determined in \cite{KC2S} using the principle of inclusion and exclusion as follows.
 
\begin{thm}\label{T-DSG1}
{\rm \cite{KC2S}} Let $G = G_{A^{(n)}}$ be a set-graph in respect of a non-empty set $A^{(n)} = \{a_1, a_2, a_3, \ldots, a_n\}, n \in \N$ and let $v_{k,i}$ be an arbitrary vertex of $G$ corresponding to a $k$-element subset of $A^{(n)}$. Then, $d_G(v_{k,i}) = (\sum\limits_{J}(-1)^{|J|-1}\cdot |\bigcap\limits_{j\in J} \cS_j|) -1$, where $J$ is an indexing set such that $\emptyset \ne J \subseteq \{0, 1, 2, \ldots, k\}$ and $\cS_j$ is the collection of subsets of $A^{(n)}$ containing the element $a_j$.
\end{thm}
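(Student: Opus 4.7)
The plan is to prove the formula by a direct application of the inclusion–exclusion principle to the neighbourhood of the vertex $v_{k,i}$. First I would unpack Definition~\ref{D-SG}: by the adjacency rule, a vertex $v_{s,m}$ (distinct from $v_{k,i}$) is a neighbour of $v_{k,i}$ if and only if the corresponding subset $A^{(n)}_{s,m}$ shares at least one element with $A^{(n)}_{k,i}$. Hence, computing $d_G(v_{k,i})$ amounts to counting the non-empty subsets of $A^{(n)}$ that meet $A^{(n)}_{k,i}$ and then subtracting $1$ for the subset $A^{(n)}_{k,i}$ itself, which is counted in that total but does not contribute to the degree because $G_{A^{(n)}}$ is loopless.

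Next I would translate this count into the language of the collections $\cS_j$. Writing $A^{(n)}_{k,i}=\{a_{j_1},a_{j_2},\ldots,a_{j_k}\}$, the subsets of $A^{(n)}$ intersecting $A^{(n)}_{k,i}$ are precisely the members of $\bigcup_{\ell=1}^{k}\cS_{j_\ell}$. Applying the principle of inclusion–exclusion to this union yields
\[
\Bigl|\bigcup_{\ell=1}^{k}\cS_{j_\ell}\Bigr| \;=\; \sum_{\emptyset\ne J\subseteq\{j_1,\ldots,j_k\}}(-1)^{|J|-1}\Bigl|\bigcap_{j\in J}\cS_j\Bigr|,
\]
which, after the correction of $-1$ for the subset $A^{(n)}_{k,i}$ itself, gives exactly the expression claimed in the theorem (with the indexing set $J$ ranging over the non-empty subfamilies of elements belonging to $A^{(n)}_{k,i}$).

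Most of the argument is routine once the set-theoretic translation is in place; the only delicate point is to be explicit about why the subset $A^{(n)}_{k,i}$ must be removed from the count. I would therefore separate this small bookkeeping step clearly: $A^{(n)}_{k,i}\in\cS_{j_\ell}$ for every $\ell$, so it appears in the union on the right-hand side, but since the set-graph contains no loops it cannot be a neighbour of $v_{k,i}$. This is the only vertex that is counted by $|\bigcup_\ell \cS_{j_\ell}|$ but is not a neighbour, since any other subset meeting $A^{(n)}_{k,i}$ is a distinct vertex adjacent to $v_{k,i}$ by definition. Subtracting $1$ therefore yields the stated formula, completing the argument.
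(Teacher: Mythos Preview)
Your argument is correct and is exactly the inclusion--exclusion approach the paper attributes to \cite{KC2S}; note that the present paper does not actually supply a proof of this statement but merely quotes it from that reference, explicitly remarking that the degree ``has been determined in \cite{KC2S} using the principle of inclusion and exclusion.'' Your derivation---identifying the neighbours of $v_{k,i}$ with the members of $\bigcup_\ell \cS_{j_\ell}$, applying inclusion--exclusion to the union, and subtracting $1$ for $A^{(n)}_{k,i}$ itself---is the natural (and presumably the original) proof, and your care in isolating the $-1$ correction is well placed.
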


The following is an important result regarding the minimal and maximal degree of vertices in a set-graph and the relation between them.

\begin{thm}\label{T-DSG2}
{\rm \cite{KC2S}} For any vertex $v_{s,i}$ of a set-graph $G = G_{A^{(n)}}$, we have $2^{n-1}-1 \le d_G(v_{s,i}) \le 2(2^{n-1}-1)$. Moreover, for any set-graph $G = G_{A^{(n)}}, \Delta (G) = 2\delta (G)$.
\end{thm}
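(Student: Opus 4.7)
The plan is to obtain a closed-form expression for $d_G(v_{s,i})$ that depends only on $s$ (as Theorem~\ref{Thm1.2} already suggests), and then read off its extremes for $s \in \{1,\dots,n\}$. Rather than expand the inclusion--exclusion formula of Theorem~\ref{T-DSG1}, I would count the neighbours of $v_{s,i}$ directly via a complement argument.

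Fix the subset $B = A^{(n)}_{s,i}$ of size $s$. A vertex $v_{t,j}\ne v_{s,i}$ is adjacent to $v_{s,i}$ precisely when $A^{(n)}_{t,j}\cap B\ne\emptyset$. The subsets of $A^{(n)}$ that miss $B$ are exactly the subsets of the $(n-s)$-element set $A^{(n)}\setminus B$, giving $2^{n-s}$ such subsets (including $\emptyset$). Hence the number of non-empty subsets of $A^{(n)}$ meeting $B$ is $2^n - 2^{n-s}$, and subtracting the vertex $B$ itself yields
\[ d_G(v_{s,i}) = 2^n - 2^{n-s} - 1. \]

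Since $2^{n-s}$ is strictly decreasing in $s$, this quantity is strictly increasing in $s\in\{1,\dots,n\}$, attaining its minimum at $s=1$ with value $2^n - 2^{n-1} - 1 = 2^{n-1}-1$ and its maximum at $s=n$ with value $2^n - 1 - 1 = 2(2^{n-1}-1)$. Both inequalities in the statement and the identity $\Delta(G) = 2\delta(G)$ follow immediately. The only real obstacle is bookkeeping: distinguishing the $2^n$ subsets of $A^{(n)}$ (which include $\emptyset$ and $B$) from the $2^n-1$ vertices of $G$, and being careful to remove $B$ itself from the neighbour count. Once this is straight, the structural content --- monotonicity of the degree in the cardinality $s$ --- is transparent and the extremes drop out without further work.
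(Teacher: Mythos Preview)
Your argument is correct. The closed form $d_G(v_{s,i}) = 2^n - 2^{n-s} - 1$ is right, the bookkeeping with $\emptyset$ and with $B$ itself is handled cleanly, and the monotonicity in $s$ immediately gives both the bounds and the identity $\Delta(G)=2\delta(G)$.

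The paper does not actually prove this statement here; it is quoted from \cite{KC2S} without proof. The only hint in the present paper toward a method is Theorem~\ref{T-DSG1}, which records the inclusion--exclusion expression $\big(\sum_{J}(-1)^{|J|-1}\,|\bigcap_{j\in J}\cS_j|\big)-1$ for the degree. Your complement-counting shortcut sidesteps that expansion entirely: instead of summing over the intersecting families $\cS_j$, you count the non-neighbours (non-empty subsets disjoint from $B$) in one stroke. This is strictly simpler and makes the dependence on $s$ alone explicit from the outset, so the extremes at $s=1$ and $s=n$ are immediate rather than requiring any further analysis of the alternating sum.
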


It was also mentioned in \cite{KC2S} that there exists a unique vertex $v_{n,1}$ having degree $\Delta (G)$. Furthermore, it can be seen that $\Delta (G)$ is always an even number whilst $\delta (G)$ is always an odd number.

Motivated from this study of set-graphs, in this paper, we introduce the notion of an edge-set graph derived from a given non-empty non-trivial graph $G$ and study the characteristics and structural properties of the edge-set graphs of certain standard graphs.

\section{Edge-Set Graph of a Graph}

Let $A$ be a non-empty finite set. We denote $\cS$ be the collection of all $s$-element subsets of $A$ (arranged in some order), where $1\le s\le |A|$, and the $i$-th element $\cS$ by $A_{i,s}$. In this paper, we study certain graphs whose vertices are labeled by distinct elements of the collection $\mathcal{P}(A)-\{\emptyset\}$ in an injective manner. We denote the vertex of a graph corresponding to the set $A_{s,i}$  by $v_{s,i}$

Using these concepts, similar to the notion of a set-graph, let us first define the notion of an edge-set graph of a given graph $G$ as follows.

\begin{defn}\label{D-ESG}{\rm 
Let $G(V,E)$ be a non-empty finite graph with $|E|=\epsilon$ and $\cE=\cP(E)-\{\emptyset\}$, where $\cP(E)$ is the power set of the edge set $E(G)$. For $1\le s\le \epsilon$, let $\cS$ be the collection of all $s$-element subsets of $E(G)$ and $E_{s,i}$ be the $i$-th element of $\cS$. Then, the \textit{edge-set graph} corresponding to $G$, denoted by $G_{E^{(\epsilon)}}$ or $\G_G$, is the graph with the following properties.
\begin{enumerate}\itemsep0mm
\item[(i)] $|V(\G_G)|=2^{\epsilon}-1$ so that there exists a one to one correspondence between $V(\G_G)$ and $\cE$.
\item[(ii)] Two vertices, say $v_{s,i}$ and $v_{t,j}$, in $\G_G$ are adjacent if some elements (edges of $G$) in $E_{s,i}$ is adjacent to some elements of $E_{t,j}$ in $G$.
\end{enumerate}}
\end{defn} 

From the above definition, it can be seen that the edge-set graph $\G_G$ of a given graph $G$ is dependent not only on the number of edges $\epsilon$, but the structure of $G$ also. Therefore, we have distinct edge-set graphs for non-isomorphic graphs of the same size. 

As in the case of a set-graph $G_{A^{(n)}}$, it can be observed that an edge-set graph $\G_G$ has an odd number of vertices. It can also be noted that if $G$ is a trivial graph, then $\G_G$ is an empty graph (since $\epsilon=0$). Also, we have $\G_{P_2} = K_1$ and $\G_{P_3}=C_3$.

In this paper, we use the following conventions.
\begin{enumerate}\itemsep0mm
\item[(i)] If an edge $e_j$ is incident with vertex $v_k$, then we write it as $(e_j \to v_k)$.
\item [(ii)] If the edges $e_i$ and $e_j$ of a graph $G$ are adjacent, then we write it as $e_i\cong e_j$.
\item[(iii)] The $n$ vertices of the path $P_n$ are positioned horizontally and the vertices and edges are labeled from left to right as $v_1, v_2, v_3, \ldots, v_n$ and $e_1, e_2, e_3, \ldots, e_{n-1}$, respectively. %That is, $e_i=v_iv_{i+1}$ in the sense that $v_{n+1}=v_1$.
\item[(iv)] The $n$ vertices of the cycle $C_n$ are seated on the circumference of a circle and the vertices and edges are labeled clockwise as $v_1, v_2, v_3, \ldots, v_n$ and $e_1, e_2, e_3, \ldots, e_n$, respectively such that $e_i=v_iv_{i+1}$, in the sense that $v_{n+1}=v_1$.
\end{enumerate} %}

Analogous to the degree of a vertex in a given graph $G$, we define the edge-degree of elements (vertices or edges) in $G$ as follows. 

\begin{defn}\label{Def-2.2}{\rm
For a graph $G$ with edges $E(G) = \{e_i: 1 \le i \le \epsilon\}$, the \textit{edge-degree of an edge} $e_i$ of $G$ incident with vertex $v_k$ is equal to the number of edges $e_j \to v_k, e_i \ne e_j$ and  denoted by $d_{G(v_k)}(e_i)$. If the edge $e_i$ is incident with vertices $v_k, v_l$, then the \textit{general edge-degree} of $e_i$ is defined to be $d_G(e_i) = d_{G(v_k)}(e_i) + d_{G(v_l)}(e_i)=d_G(v_k)+d_G(v_l)-2$. (Also see \cite{AV1}).}
\end{defn}

\ni Similarly, the edge-degree of a vertex of a given graph $G$ is defined as follows.

\begin{defn}\label{Def-2.3}{\rm
For a graph $G$ with edges $E(G) = \{e_i: 1 \le i \le \epsilon\}$ the \textit{edge-degree of a vertex} $v_i$, denoted by $d_{G(e)}(v_i)$, is defined as $d_{G(e)}(v_i) = \sum\limits_{(e_l \to v_i)}d_{G(v_i)}(e_l)$.}
\end{defn}

Using the above definitions, we introduce the notion of the total edge-degree of a graph as follows.

\begin{defn}\label{Def-2.4}{\rm 
The \textit{total edge-degree} of a graph G, denoted by $d^t_{G(e)}(G)$, is defined by $d^t_{G(e)}(G) = \sum\limits_{v_i \in V(G)}d_{G(e)}(v_i)$.}
\end{defn}

Invoking these definitions given above, it can be noticed that both $d^t_{G(e)}(G)$ and $d_{G(e)}(v_i)$ are single values, while $d_{G(v_k)}(e_j) \le d_{G(v_m)}(e_j), (e_j \to v_k), (e_j \to v_m)$.

\begin{ill}{\rm 
The graphs having three edges $e_1, e_2, e_3$ are graphs $P_4, C_3$, and $K_{1,3}$. The corresponding edge-set graphs on the vertices $v_{1,1} =\{e_1\}, v_{1,2} = \{e_2\}, v_{1,3} = \{e_3\}, v_{2,1}=\{e_1,e_2\}, v_{2,2}=\{e_1, e_3\}, v_{2,3}=\{ e_2,e_3\}, v_{3,1}=\{e_1,e_2,e_3\}$ are depicted below.}
\end{ill} 

\begin{figure*}[h!]
    \centering
    \begin{subfigure}[b]{0.5\textwidth}
        \centering
        \includegraphics[height=1.75in]{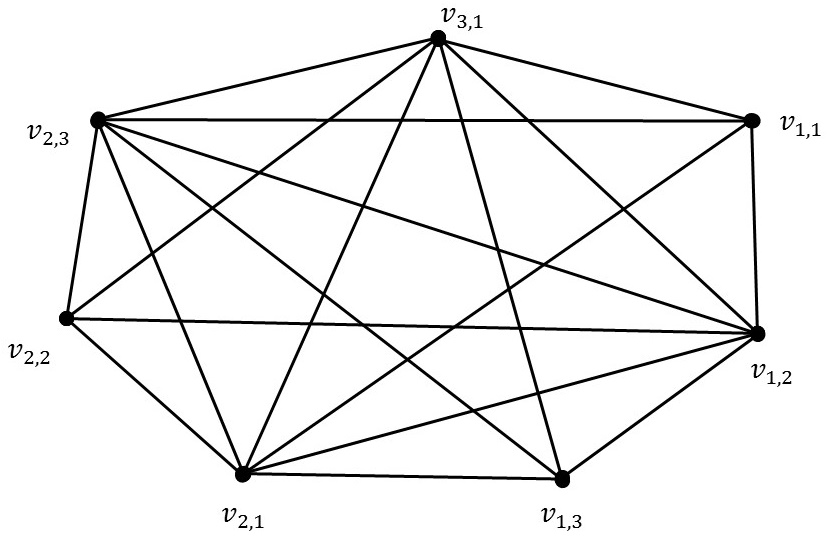}
        \caption{$\G_{P_4}$}
    \end{subfigure}%
    ~ 
    \begin{subfigure}[b]{0.5\textwidth}
        \centering
        \includegraphics[height=1.75in]{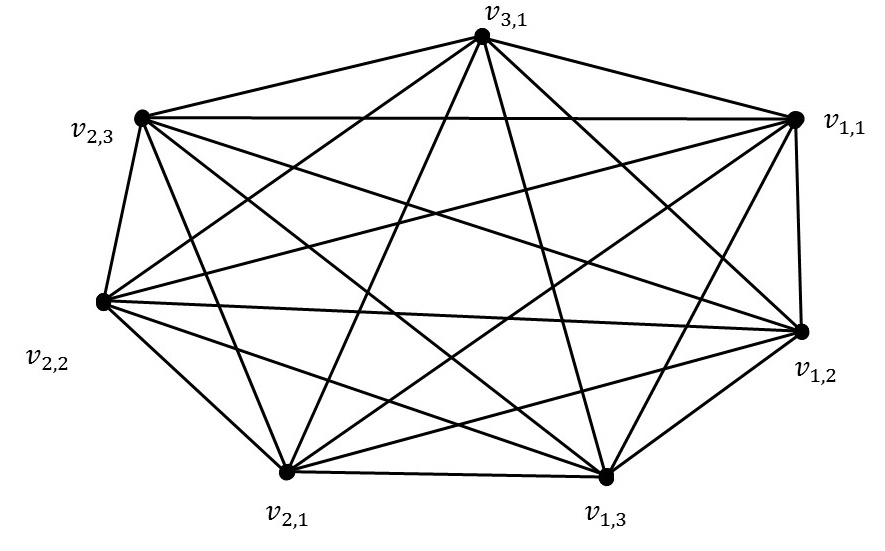}
        \caption{$\G_{C_4}= \G_{K_{1,3}}=K_7$}
    \end{subfigure}
    \caption{}
\end{figure*}

We notice that both $\G_{C_3}$ and $\G_{K_{1,3}}$ are complete graphs. This observation follows from results to follow. The following is a straight forward result and an important property of edge-set graphs as well.

\begin{prop}
The induced graph $\langle \{v_{1,i}: 1\le i \le \epsilon\}\rangle$ of the edge-set graph $\G_G$ is isomorphic to the line graph of $G$.
\end{prop}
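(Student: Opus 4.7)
The plan is to exhibit an explicit bijection between the vertex set $\{v_{1,i}:1\le i\le \epsilon\}$ of the induced subgraph and the vertex set of the line graph $L(G)$, and then verify that this bijection preserves adjacency in both directions.

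First I would set up the map $\varphi: v_{1,i}\mapsto e_i$, where $e_i$ is the single edge of $G$ corresponding to the singleton edge-subset $E_{1,i}=\{e_i\}$. By Definition \ref{D-ESG}(i), the correspondence between vertices of $\G_G$ and nonempty subsets of $E(G)$ is one-to-one, and restricting to the singleton subsets gives a bijection onto $E(G)=V(L(G))$.

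Next I would verify adjacency preservation. Two vertices $v_{1,i}$ and $v_{1,j}$ (with $i\neq j$) are adjacent in the induced subgraph $\langle\{v_{1,i}:1\le i\le\epsilon\}\rangle$ precisely when they are adjacent in $\G_G$. By Definition \ref{D-ESG}(ii), this happens iff some element of $E_{1,i}=\{e_i\}$ is adjacent (in $G$) to some element of $E_{1,j}=\{e_j\}$; since each subset is a singleton, this reduces to the single condition $e_i\cong e_j$. On the other hand, by the definition of the line graph, $e_i$ and $e_j$ are adjacent in $L(G)$ iff they share a common endpoint in $G$, which is exactly the relation $e_i\cong e_j$. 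Hence $\varphi$ preserves and reflects adjacency, so it is a graph isomorphism.

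Since the statement essentially unfolds directly from the definitions, there is no genuine obstacle; the only point requiring a moment of care is the observation that the quantifier ``some element of $E_{1,i}$ is adjacent to some element of $E_{1,j}$'' collapses to the simple adjacency relation between $e_i$ and $e_j$ precisely because both subsets are singletons. This collapse is what makes the singleton-layer of $\G_G$ recover $L(G)$, and it is the single fact worth highlighting explicitly in the write-up.
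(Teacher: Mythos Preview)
Your proposal is correct and follows essentially the same approach as the paper's own proof: both set up the natural correspondence $v_{1,i}\leftrightarrow e_i$ between singleton-layer vertices and edges of $G$, and then observe that adjacency in $\G_G$ restricted to this layer reduces exactly to edge-adjacency in $G$, which is the defining relation of $L(G)$. Your write-up is somewhat more explicit (naming the bijection $\varphi$ and separating the two directions of adjacency preservation), but the underlying argument is identical.
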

\begin{proof}
Let $\G_G$ the edge-set graph of a given graph $G$. Then, every vertex $v_{1,i}$ of $\G_G$ corresponds to the singleton subset $\{e_i\}$ of $E(G)$, where $1\le i\le \epsilon$ and two vertices $v_{1,i}$ and $v_{1,j}$ of $\G_G$ are adjacent in $\G_G$ if and only if the corresponding edges $e_i$ and $e_j$ are adjacent in $G$. Hence, the induced graph $\langle v_{1,i}; 1\le i \le \epsilon\rangle$ of the edge-set graph $\G_G$ is isomorphic to $L(G)$.
\end{proof}

Analogous to the corresponding theorem on the relation between the minimum and maximum degree of vertices in a set-graph, we propose the following theorem.

\begin{thm}\label{Thm-2.6}
For the edge-set graph $\G_G$, $\epsilon \ge 1$ corresponding to a given graph $G$, we have $\Delta(\G_G) = 2(2^{\epsilon-1}-1)$ and $\delta(\G_G) \ge 2(\epsilon-1)$.
\end{thm}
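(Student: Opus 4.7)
The plan is to treat the two assertions independently. For the maximum-degree claim, I would note that $2(2^{\epsilon-1}-1) = 2^{\epsilon}-2 = |V(\G_G)|-1$, so the statement reduces to exhibiting a single vertex adjacent to every other. The natural candidate is $v_{\epsilon,1}$, whose associated edge-subset is $E(G)$ itself. Given any other vertex $v_{s,i}$, I would pick any $e \in E_{s,i}$; since $G$ is connected with $\epsilon \ge 2$, the edge $e$ has at least one adjacent edge $e^* \in E(G) = E_{\epsilon,1}$, and the pair $(e,e^*)$ witnesses adjacency of $v_{s,i}$ and $v_{\epsilon,1}$ via Definition~\ref{D-ESG}. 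The degenerate case $\epsilon = 1$ is immediate since $\G_G \cong K_1$.

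For the lower bound $\delta(\G_G) \ge 2(\epsilon-1)$, I would fix an arbitrary vertex $v_{s,i}$ and produce at least $2^{\epsilon-1}$ distinct neighbors, from which the claim follows since $2^{\epsilon-1} \ge 2(\epsilon-1)$ for every $\epsilon \ge 1$ by an elementary induction. The idea is to locate an edge $e^* \in E(G) \setminus E_{s,i}$ that is $G$-adjacent to some edge of $E_{s,i}$. Given such an $e^*$, every non-empty $F \subseteq E(G)$ containing $e^*$ yields a vertex of $\G_G$ which is (i) distinct from $v_{s,i}$, since $e^* \notin E_{s,i}$, and (ii) adjacent to $v_{s,i}$ by Definition~\ref{D-ESG}. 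The number of such $F$ is precisely $2^{\epsilon-1}$, giving the required bound.

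The main obstacle is guaranteeing such an $e^*$ whenever $\emptyset \ne E_{s,i} \subsetneq E(G)$. I would argue by contradiction: if every $G$-neighbor of every edge in $E_{s,i}$ already lay in $E_{s,i}$, then $E_{s,i}$ would be closed under edge-adjacency in $G$. Picking any $e_0 \in E_{s,i}$ and any $f \in E(G) \setminus E_{s,i}$, connectedness of $G$ together with $\epsilon \ge 2$ allows me to link $e_0$ to $f$ by a chain $e_0 \cong e_1 \cong \cdots \cong e_k = f$, and induction along this chain (using closure at each step) forces $f \in E_{s,i}$, a contradiction. The only case this argument misses is $E_{s,i} = E(G)$, for which $d_{\G_G}(v_{\epsilon,1}) = 2(2^{\epsilon-1}-1) \ge 2(\epsilon-1)$ was already established in the first part, completing the plan.
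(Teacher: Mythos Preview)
Your proof is correct and takes a genuinely different route from the paper. For $\Delta(\G_G)$, the paper simply cites the analogous set-graph result (Theorem~\ref{T-DSG2}), whereas you verify directly that $v_{\epsilon,1}$ is universal; the two are essentially the same idea, though your version is self-contained. The real difference is in the lower bound. The paper argues that among connected graphs on $\epsilon$ edges the path $P_{\epsilon+1}$ minimises the total edge-degree, checks the small cases by hand, and then asserts an induction yielding $2(\epsilon-1) \le \delta(\G_{P_{\epsilon+1}}) \le \delta(\G_G)$. Your argument bypasses this reduction entirely: by locating a boundary edge $e^{\ast}\in E(G)\setminus E_{s,i}$ adjacent to $E_{s,i}$ (which exists by connectedness of the line graph), you exhibit $2^{\epsilon-1}$ explicit neighbours of $v_{s,i}$ and then invoke the elementary inequality $2^{\epsilon-1}\ge 2(\epsilon-1)$. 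This is cleaner, does not rely on the extremality of paths, and in fact proves the sharper bound $\delta(\G_G)\ge 2^{\epsilon-1}$ for $\epsilon\ge 2$ (which is tight, e.g.\ for $P_{\epsilon+1}$). The paper's approach, on the other hand, makes visible \emph{where} the minimum is attained and ties $\delta$ to the combinatorial invariant $d^t_{G(e)}(G)$, at the cost of a less transparent inductive step.
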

\begin{proof} First part of the Theorem follows immediately by Theorem \ref{T-DSG2}, we have $\Delta(\G_G) = 2(2^{\epsilon-1}- 1)$.

Next, note that the graph $G$ with $\epsilon$ edges, with $\min\{d^t_{G(e)}(G)\} = 2(\epsilon-1)$ is the path $P_{\epsilon +1}$. It can be verified that for $P_n, 2\le n\le 4$, $\delta(\G_{P_{\epsilon + 1}})= 2(\epsilon-1)$, where $\epsilon=1,2,3$. For $P_5$, we have $d^t_{P_5(e)}(P_5)=2(4 -1)=6<8=\delta(P_{5, E^{(4)}})$. In a similar way, by mathematical induction, the result $2(\epsilon-1) \le \delta(P_{(\epsilon + 1),E})\le \delta(\G_G)$, $\epsilon \ge 1$ follows. 
\end{proof}

The following theorem describes the condition required for an edge-set graph to be a complete graph.

\begin{thm}\label{Thm-2.7}
For $\epsilon \ge 4$, the edge-set graph of a given graph $G$ is complete if and only if $G \cong K_{1,\epsilon}$. In other words, the edge-set graph of a graph $G$, with more than three edges, is a complete graph if and only if $G$ is a star graph.
\end{thm}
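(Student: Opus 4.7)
The plan is to establish the two implications separately.

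For the ($\Leftarrow$) direction, suppose $G \cong K_{1,\epsilon}$. Every pair of distinct edges of $G$ shares the unique center vertex and is therefore adjacent in $G$. Given two distinct vertices $v_{s,i}, v_{t,j}$ of $\G_G$, the subsets $E_{s,i}$ and $E_{t,j}$ differ, so there exists some $e \in E_{s,i}\setminus E_{t,j}$ (or vice versa); choosing any $e' \in E_{t,j}$ yields $e \ne e'$, and such $e$ and $e'$ are adjacent in $G$. Hence Definition~\ref{D-ESG}(ii) gives $v_{s,i}v_{t,j} \in E(\G_G)$, so $\G_G$ is complete.

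For the ($\Rightarrow$) direction, suppose $\G_G$ is complete. Then its induced subgraph on the singleton layer $\{v_{1,i}: 1\le i\le\epsilon\}$ is also complete, and by the line-graph proposition above, this induced subgraph is isomorphic to $L(G)$. Thus $L(G) \cong K_\epsilon$, which is equivalent to saying that every two edges of $G$ share a vertex. The problem therefore reduces to the structural claim: if a graph $G$ with $\epsilon \ge 4$ edges has every pair of edges adjacent, then $G \cong K_{1,\epsilon}$.

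To prove this claim, fix two edges $e_1 = vu_1$ and $e_2 = vu_2$ meeting at a vertex $v$. Any further edge, being adjacent to both $e_1$ and $e_2$, either passes through $v$ or equals $u_1u_2$. Suppose for contradiction that some edge $e_3 = u_1u_2$ occurs, so that $\{e_1,e_2,e_3\}$ forms a triangle on $\{v,u_1,u_2\}$. Since $\epsilon \ge 4$, pick a further edge $e_4 \notin \{e_1,e_2,e_3\}$. A short case analysis on which of $\{v, u_1, u_2\}$ can serve as an endpoint of $e_4$ shows that $e_4$ can meet all three of $e_1, e_2, e_3$ only when \emph{both} its endpoints lie in $\{v,u_1,u_2\}$ (any endpoint outside this set leaves one of the three adjacency conditions unsatisfied). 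This forces $e_4 \in \{vu_1, vu_2, u_1u_2\} = \{e_1,e_2,e_3\}$, contradicting the choice of $e_4$. Hence no triangle edge $u_1u_2$ occurs, every edge of $G$ passes through $v$, and $G \cong K_{1,\epsilon}$.

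The only nontrivial ingredient is the case analysis in the last paragraph that rules out the triangle configuration once $\epsilon \ge 4$; the rest follows directly from Definition~\ref{D-ESG} and the line-graph proposition. Note that the hypothesis $\epsilon \ge 4$ is essential: for $\epsilon = 3$ the triangle $K_3$ also satisfies $L(G) \cong K_3$, yielding the exceptional case $\G_{C_3} \cong K_7$ already observed in the excerpt.
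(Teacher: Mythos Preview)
Your proof is correct and arguably cleaner than the paper's own argument. For the ($\Leftarrow$) direction, the paper proceeds by induction on $\epsilon$: it checks $K_{1,4}$ by hand and then argues that attaching a new pendant edge at the center preserves completeness of the edge-set graph; your one-line observation that any two distinct edges of a star share the center accomplishes the same thing directly. For the ($\Rightarrow$) direction, the paper removes the apex vertex $v_{\epsilon,1}$, appeals to an informal ``inverse of Definition~\ref{D-ESG}'' to recover $K_{1,\epsilon}$ as the minimal underlying graph, and then compares vertex counts ($2^{\epsilon+1}-1$ versus $2^{\epsilon}-1$) to rule out any additional edge. Your route is different: you restrict to the singleton layer, invoke the line-graph proposition to get $L(G)\cong K_\epsilon$, and then run the classical Whitney-type case analysis showing that a graph with $\epsilon\ge 4$ pairwise-adjacent edges must be a star. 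This approach is more transparent, reuses a result already proved in the paper, and makes explicit where the hypothesis $\epsilon\ge 4$ enters (to exclude the triangle $C_3$), whereas the paper's count-based argument leaves that role somewhat implicit.
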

\begin{proof}
First consider the graph $G=K_{1,4}$, whose central vertex is denoted by $u$ and the other vertices are denoted by $v_1,v_2,v_3,v_4$ respectively. Let the edges of $G$ be given by $e_i=uv_i;~1\le i\le 4$. Clearly, the number of non-empty subsets of $E(K_{1,4})$ is given by $2^4 -1$ and each subset has at least one edge $e_i \in E(K_{1,4})$ adjacent to at least one edge $e_j \in E(K_{1,4})$ in each of the other subsets. Hence, pairwise there exists an edge $v_{s,i}v_{t,j}$ for all $s$-element and $t$-element subsets of $E(K_{1,4})$. Hence, $K_{(1,4),E^{(4)}}$ is complete. Now, assume that the results holds for $G = K_{1,k}$, $k$ being a positive integer greater than $4$. Now, consider the graph $G=K_{1,(k+1)}$ with $u$ as the central vertex and $v_{k+1}$ the new vertex in $K_{1,(k+1)}$ that is not in $K_{1,k}$. Then, the new edge $e_{k+1}=uv_{k+1}$ is adjacent to all other edges in $G$ and hence the vertices in the edge-set graph with the set-label containing the element $e_{k+1}$ will also be adjacent to all other vertices of the edge-set graph $\G_{K_{1,k+1}}$. Therefore, $\G_{K_{1,k+1}}$ is also a complete graph. Hence, by mathematical induction, the result holds for all $K_{1,\epsilon}, \epsilon \ge 4$.

Conversely, assume the edge-set graph $\G_G, ~|V(\G_G)| = 2^\epsilon -1 \ge 15$ is complete. Therefore, $\G_G - v_{\epsilon,1}$ on $2^\epsilon-2=\frac{1}{2}(2^{\epsilon +1}-1)$ vertices is complete as well. Applying the inverse of Definition \ref{D-ESG}, to $\G_G-v_{\epsilon,1}$ results in a smallest graph $K_{1, \epsilon}$. Assume the edge-set graph $\G_G$ also corresponds to $K_{1,\epsilon} + v_iv_j$ with $v_iv_j$ an arbitrary additional edge. Now, we must consider an edge-set graph on $2^{\epsilon+1} -1 > \frac{1}{2}(2^{\epsilon +1}-1) +1$ edge-subsets. The aforesaid edge-set graph is not isomorphic to the initial graph $\G_G, |V(\G_G)| = 2^\epsilon-1 \ge 15$. Hence, $K_{1,\epsilon}$ is the unique graph corresponding to the complete edge-set graph, $\G_G, |V(\G_G)| = 2^\epsilon -1 \ge 15 \Rightarrow \epsilon \ge 4$. This completes the proof. 
\end{proof}

It can be followed that the edge-set graphs corresponding to  $K_{1,1}, K_{1,2}$ and $K_{1,3}$ are complete. Furthermore, for any edge $e_i$ in the cycle $C_n, n \ge 4$ there is exactly $n-3$ edges which are not adjacent to $e_i$. By Theorem 2.3, $\G_{C_n}$ is not complete. Hence, $C_3$ is the only cycle for which the corresponding edge-set graph is complete.

\begin{thm}\label{Thm-2.8}
For $\epsilon = n \ge 2$, we have $\sum\limits_{v_{i,j} \in V(\G_G)}d_{\G_G}(v_{i,j}) > \sum\limits_{v_{i,j} \in V(G_{A^{(n)}})}d_{G_{A^{(n)}}}(v_{i,j})$.
\end{thm}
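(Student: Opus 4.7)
Since $\sum_v d_H(v) = 2|E(H)|$ for any graph $H$, the statement is equivalent to $|E(\G_G)| > |E(G_{A^{(n)}})|$ when $\epsilon = n$. My plan is to overlay the two graphs on a common vertex set and split the edge-count difference into manageable pieces.

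Fix a labelling $E(G) = \{e_1, \ldots, e_n\}$ and use the bijection $a_j \leftrightarrow e_j$ to identify $V(G_{A^{(n)}})$ with $V(\G_G)$: each vertex $v_{s,i}$ corresponds simultaneously to an $s$-element subset $A_{s,i}$ of $A^{(n)}$ and an $s$-element subset $E_{s,i}$ of $E(G)$. With both graphs on the same vertex set, every potential edge is a pair $\{E_{s,i}, E_{t,j}\}$ of distinct nonempty subsets of $E(G)$. Let $T_1$ be the collection of pairs that \emph{intersect} but have \emph{no} $e \in E_{s,i}$, $e' \in E_{t,j}$ satisfying $e \cong e'$ (edges of $G_{A^{(n)}}$ only), and $T_2$ the collection of pairs that are \emph{disjoint} but do carry such a cross $\cong$-adjacency (edges of $\G_G$ only). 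The inequality to prove is then $|T_2| > |T_1|$.

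A strict contribution already comes from singleton-vs-singleton pairs. By the proposition above, the induced subgraph of $\G_G$ on $\{v_{1,1}, \ldots, v_{1,n}\}$ is $L(G)$, whereas the induced subgraph of $G_{A^{(n)}}$ on the same vertices is empty since distinct singletons are disjoint. So singleton pairs contribute $|E(L(G))| \geq 1$ to $T_2$ (the inequality using that $G$ is connected with $n \geq 2$) and $0$ to $T_1$. To finish I would construct an injection $\Phi$ from the $T_1$-pairs involving at least one non-singleton into $T_2$-pairs that are not singleton-vs-singleton. Given $\{S_1, S_2\} \in T_1$ with $I = S_1 \cap S_2$, the $T_1$-hypothesis forces $I$ to be independent in $L(G)$ and disconnected in $L(G)$ from $(S_1 \cup S_2) \setminus I$; together with the fact that every edge of $G$ has $L(G)$-degree at least one, any chosen $e \in I$ has some $L(G)$-neighbor $e' \notin S_1 \cup S_2$. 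The map $\Phi$ would shift the intersection out of one side and graft $\{e'\}$ onto the other, using canonical (e.g.\ smallest-index) choices of $e$ and $e'$, thereby producing a disjoint pair that carries a cross $\cong$-adjacency.

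The main obstacle will be making $\Phi$ injective: recovering $(S_1, S_2)$ from $\Phi(\{S_1, S_2\})$ requires simultaneously reconstructing the deleted intersection $I$ and the inserted neighbor $e'$ from the image, which demands a careful canonical encoding (and a separate treatment of the borderline case in which one of $S_1, S_2$ is a singleton contained in the other). Should this injection prove too fiddly, a back-up is a direct inclusion-exclusion count: $|E(G_{A^{(n)}})|$ admits the closed form $\binom{2^n - 1}{2} - \tfrac{1}{2}(3^n - 2 \cdot 2^n + 1)$, while $|E(\G_G)|$ can be lower-bounded by summing the $L(G)$-adjacency contributions over cardinality classes; comparing these expressions then yields the inequality for every connected $G$ with $n \geq 2$ edges.
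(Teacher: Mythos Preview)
Your reduction to $|T_2|>|T_1|$ is correct, and the observation that singleton--singleton pairs already contribute $|E(L(G))|\ge 1$ to $T_2$ and nothing to $T_1$ is the right anchor for strictness. This is a genuinely different route from the paper's: there the argument runs vertex by vertex, asserting $d_{\G_{P_{k+2}}}(v_{s,i})\ge d_{G_{A^{(k+1)}}}(v_{s,i})$ for every $v_{s,i}$ (with strict inequality at the new singleton), pushing through paths by induction, and then passing to arbitrary connected $G$ via the heuristic that paths are ``minimally connected''. Your global edge-count comparison sidesteps any vertex-wise inequality, which is a real advantage since that inequality does not actually hold pointwise: for $S=\{e_1,e_3,e_5\}$ in $P_6$ one has $N(S)=\{e_2,e_4\}$ and hence $d_{\G_{P_6}}(v_S)=2^{5}-2^{3}=24<27=2^{5}-2^{2}-1=d_{G_{A^{(5)}}}(v_S)$.

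The gap in your proposal is that neither of your two lines is carried to completion. The map $\Phi$ is only a phrase---``shift the intersection out of one side and graft $\{e'\}$ onto the other''---and as stated it is not even well defined in the borderline case you yourself flag: if $S_1\subsetneq S_2$ with $S_1$ a singleton, removing $I=S_1$ from $S_1$ leaves the empty set. More seriously, nothing in the description lets you recover $(S_1,S_2)$ from $\Phi(\{S_1,S_2\})$: the deleted intersection $I$ can have any size, the inserted $e'$ is chosen from outside $S_1\cup S_2$, and ``smallest-index'' tie-breaking does not by itself prevent two different $T_1$-pairs from landing on the same disjoint pair. The back-up is likewise only a promise: you give the closed form for $|E(G_{A^{(n)}})|$ but no explicit lower bound for $|E(\G_G)|$ and no comparison. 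Until one of these two routes is actually executed, what you have is a strategy, not a proof.
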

\begin{proof}
Since paths have the minimally connectivity, we first prove the result for paths. It is to be noted that For $\epsilon=n=2$, we have $\G_{P_3} \cong K_3$ and $G_{A^{(2)}} \cong P_3$. Hence,$\sum\limits_{v_{i,j} \in V(\G_{P_3})}d_{\G_{P_3}}(v_{i,j}) \ge \sum\limits_{v_{i,j} \in V(\G_{P_3})}d_{\G_{P_3}}(v_{i,j})  = 6 > 4 = \sum\limits_{v_{i,j} \in V(G_{A^{(2)}})}d_{G_{A^{(2)}}}(v_{i,j})$. 

Assume the result holds for all paths $P_{k+1}$ hence, by implication it holds for all graphs $G$ on $\epsilon = n = k$ edges.

Now consider $P_{k+2} = P_{k+1} + e_{k+1}$. Note that that in the set-graph $G_{A^{(k+1)}}$, singleton subsets are pairwise non-adjacent (see \cite{KC2S}) while in an edge-set graph some pairs of singleton edge-subsets are adjacent if and only if the corresponding edges are adjacent in $G$. Hence, it can be noted that $d_{\G_{P_{k+2}}}(v_{1,k+1})=d_{G_{A^{(k+1)}}}(v_{1,k+1}) +1 > d_{G_{A^{(k+1)}}}(v_{1,k+1})$. Hence, $\delta(\G_{P_{k+2}}) > \delta(G_{A^{(k+1)}})$. Similarly, we have $d_{\G_{P_{k+2}}}(v_{s,i}) \ge d_{G_{A^{(k+1)}}}(v_{s,i})$, $\forall s,i$. We also note that $G_{A^{(k+1)}}$ has a unique vertex having degree equal to $\Delta(G_{A^{(k+1)}})$ (see \cite{KC2S}) when the edge-set graph $\G_{P_{k+2}}$ has four such vertices. Hence,  we have $\sum\limits_{v_{i,j} \in V(\G_{P_{k+2}})}d_{\G_{P_{k+2}}}(v_{i,j})> \sum\limits_{v_{i,j} \in V(G_{A^{(k+1)}})}d_{G_{A^{(k+1)}}}(v_{i,j})$. Therefore, the result follows for all paths by mathematical induction.

Since all other connected graphs have higher connectivity than paths, the result, $\sum\limits_{v_{i,j} \in V(\G_G)}d_{\G_G}(v_{i,j}) > \sum\limits_{v_{i,j} \in V(G_{A^{(n)}})}d_{G_{A^{(n)}}}(v_{i,j})$, holds for any connected graphs.
\end{proof}

In \cite{KC2S}, it is proved that there exists a unique vertex $v_{n,1}$ in a set-graph $G_{A^{(n)}}$ having degree $\Delta(G_{A^{(n)}})$. In edge-set graphs, the analogous result does not hold. In order to discuss the existence of multiple vertices with the maximum degree in an edge-set graph,we need the following definition.

\begin{defn}{\rm 
A \textit{connected edge dominating set} of a graph $G$ is an edge dominating set $X \subseteq E(G)$ where the induced graph $\langle X \rangle$ is a connected subgraph of $G$. The minimum number of elements in a connected edge dominating set of a graph $G$ is called the \textit{connected edge domination number} (CED-number) of $G$.}
\end{defn}

\ni In view of this definition, we have the following theorem.

\begin{thm}\label{Thm-2.9}
Let $X$ be a connected edge dominating set of a graph $G$ with $\epsilon \ge 2$ and let $V_X(\G_G)$ be the the subset of vertex set of the edge-set graph $\G_G$ defined by $V_X(\G_G) = \{v_{s,i}: E_{s,i} \in \cS, X\subseteq E_{s,i}\}$. Then, we have $d_{\G_G}(v_{s,i}) = \Delta(\G_G), \forall v_{s,i} \in V_X(\G_G)$. 
\end{thm}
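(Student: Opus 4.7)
The plan is to reduce the claim to showing that every $v_{s,i} \in V_X(\G_G)$ is adjacent to every other vertex of $\G_G$. Because $|V(\G_G)| = 2^{\epsilon}-1$, the largest possible degree in $\G_G$ is $2^{\epsilon}-2 = 2(2^{\epsilon-1}-1)$, which by Theorem~\ref{Thm-2.6} is exactly $\Delta(\G_G)$. Therefore, once such a $v_{s,i}$ is shown to dominate $V(\G_G)$, the conclusion $d_{\G_G}(v_{s,i}) = \Delta(\G_G)$ is immediate, and no degree computation is required.

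To establish this universal adjacency, I would fix an arbitrary vertex $v_{t,j} \neq v_{s,i}$ and exhibit an edge $e \in E_{s,i}$ and an edge $f \in E_{t,j}$ with $e \cong f$ in $G$. The argument splits into three cases according to the position of $E_{t,j}$ relative to $X$. First, if some $f \in E_{t,j}$ lies outside $X$, the edge dominating property of $X$ furnishes an $e \in X$ adjacent to $f$ in $G$; since $X \subseteq E_{s,i}$, this gives the desired pair. Second, if $E_{t,j} = X$, then $v_{s,i} \ne v_{t,j}$ forces $E_{s,i} \supsetneq X$, so any $e' \in E_{s,i}\setminus X$ is dominated by some $e \in X = E_{t,j}$ and the pair $(e', e)$ settles the adjacency.

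The remaining and most delicate case is $\emptyset \ne E_{t,j} \subsetneq X$. Here the edge dominating property of $X$ supplies nothing useful, since every edge of $E_{t,j}$ already belongs to $X$; instead I would invoke the connectedness of $\langle X\rangle$. If no edge of $E_{t,j}$ shared a vertex of $G$ with any edge of $X\setminus E_{t,j}$, then the vertex sets $V(E_{t,j})$ and $V(X\setminus E_{t,j})$ would be disjoint and $\langle X\rangle$ would decompose into two vertex-disjoint nonempty parts, contradicting its connectedness. Hence there exist $f \in E_{t,j}$ and $e \in X\setminus E_{t,j} \subseteq E_{s,i}$ with $e \cong f$, as required.

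The main obstacle is precisely this last subcase: it is the only place where the connectivity hypothesis on $X$ (as opposed to mere edge-domination) is genuinely used, and one has to translate the graph-theoretic connectedness of $\langle X\rangle$ into a statement about overlapping endpoints of edge-subsets. The other two cases, by contrast, are essentially unpacking of the definitions. Once the three cases are combined, $v_{s,i}$ is adjacent to every other vertex of $\G_G$, which yields $d_{\G_G}(v_{s,i}) = 2^{\epsilon}-2 = \Delta(\G_G)$ for every $v_{s,i} \in V_X(\G_G)$.
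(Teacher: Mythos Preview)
Your proof is correct and follows essentially the same strategy as the paper: show that any $v_{s,i}\in V_X(\G_G)$ is adjacent to every other vertex of $\G_G$, using the edge-domination property of $X$ for edges of $E_{t,j}$ lying outside $X$ and the connectedness of $\langle X\rangle$ for edges lying inside $X$. The paper organises this into two cases according to whether a chosen edge $e_j$ lies in $X$ or not, whereas you split into three cases according to how $E_{t,j}$ sits relative to $X$; your finer case analysis (in particular singling out $E_{t,j}=X$ and $E_{t,j}\subsetneq X$) makes the role of connectedness more explicit, but the underlying argument is the same.
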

\begin{proof}
By the choice of $X$, it is the set-labels of some element in $V_X(\G_G)$. Consider any edge $e_j \in E(G)$. Then, we consider the following cases.

\ni \textit{Case-1:} If $e_j \in X$, then there exists a vertex, say $v_{l,k}$, in $\G_G$ corresponding to $X$, which is adjacent to all other vertices corresponding to the edge-subsets  $E_{s,i}$ containing the edge $e_j$. Since $X$ is an edge dominating set onto itself, this argument holds for all edges in $X$.

\ni \textit{Case-2:} Since $X$ is an edge dominating set, all edges in $E(G)-X$ are adjacent to some edges in $X$. If $e_j \in E(G)-X$, then a similar argument holds.

\ni From the above two cases, we can see that any arbitrary vertex $v_{l, k}$ of $V_X(\G_G)$ is adjacent to all vertices in $\G_G$. Hence, we have $d_{\G_G}(v_{s,i})=\Delta(\G_G)$.
\end{proof}

\begin{rem}{\rm 
The edge-set $E(G)$ is the set-label of some vertex in $V(\G_G)$ and for any edge $e_i\in E(G),i\le \epsilon$, the edge-subsets $E(G)-e_i$ are connected edge dominating sets in $G$. Hence, we have $d_{\G_G}(v_{\epsilon,1})=\Delta(\G_G)$ and $d_{\G_G}(v_{\epsilon-1,j})= \Delta (\G_G)$.}
\end{rem}

%\section{On Paths, Cycles and Complete Graphs}

% % % % % % % % % % % % % % % %
% % % % % % % % % % % % % % % % % %
% % % % % % % % % % % % % % % % % % % % % %

By Theorem \ref{Thm-2.7}, we note that the edge-set graphs corresponding to paths, cycles and complete graphs are not complete graphs. Hence, determining the maximum and minimum degrees of the corresponding edge-set graphs arouses much interest in this context. 

Let us now introduce the notion of connected edge domination index of a given graph as follows.
 
\begin{defn}{\rm 
The \textit{connected edge domination index} (CED-index) of a graph $G$ is defined as the number of smallest connected edge dominating sets in $G$ and is denoted by $\fE(G)$.}
\end{defn}

The CED-indices of paths, cycles and complete graphs are determined in the following results.

\begin{prop}\label{Ppn-3.1(a)}
The  CED-index of a path $P_n$ is $1$.
\end{prop}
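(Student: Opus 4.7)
The plan is to exploit the linear structure of $P_n$ to show first that any connected edge dominating set must consist of consecutive edges, and then to pin down uniquely which such run has minimum cardinality. Throughout, label the edges $e_1,e_2,\ldots,e_{n-1}$ according to the convention fixed earlier, with $e_i=v_iv_{i+1}$.

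First, I would observe that if $X\subseteq E(P_n)$ and the induced subgraph $\langle X\rangle$ is connected, then $X$ must be a set of consecutive edges, i.e., $X=\{e_a,e_{a+1},\ldots,e_b\}$ for some $1\le a\le b\le n-1$. Indeed, the only adjacency possible between edges of $P_n$ is the consecutive one $e_i\cong e_{i+1}$, so a gap in the index set would immediately disconnect $\langle X\rangle$.

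Next, I would translate the edge domination condition into constraints on $a$ and $b$. The edge $e_1$ is adjacent in $P_n$ only to $e_2$, so for $e_1$ to be dominated by $X$ we must have either $e_1\in X$ or $e_2\in X$, equivalently $a\le 2$. By the symmetric argument at the other end, $b\ge n-2$. Now $|X|=b-a+1$, which is minimized exactly when $a$ is as large as possible and $b$ as small as possible, namely $a=2$ and $b=n-2$, producing the single set $X^\ast=\{e_2,e_3,\ldots,e_{n-2}\}$ of cardinality $n-3$ (for $n\ge 4$). Any other admissible choice of $(a,b)$ from $\{1,2\}\times\{n-2,n-1\}$ strictly enlarges the set, so $X^\ast$ is the unique minimum connected edge dominating set, giving $\mathfrak{E}(P_n)=1$. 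The small cases $P_2$ (with the unique set $\{e_1\}$) and $P_4$ (with the unique set $\{e_2\}$) fit the same pattern and can be verified directly.

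I do not anticipate a serious obstacle here; the only subtle point is confirming that ``$\langle X\rangle$ connected'' forces consecutiveness in $P_n$, which is a one-line observation from the path structure, and once that is in hand the rest is just the optimization of $b-a+1$ under two one-sided constraints.
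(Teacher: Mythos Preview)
Your proposal is correct and follows essentially the same route as the paper: both arguments identify $\{e_2,\ldots,e_{n-2}\}$ as the unique minimum connected edge dominating set of $P_n$. You supply the justification the paper omits---namely that a connected edge subset of $P_n$ must be a consecutive run $\{e_a,\ldots,e_b\}$ and that edge domination forces $a\le 2$ and $b\ge n-2$---whereas the paper simply lists the candidate connected edge dominating sets and singles out the smallest.
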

\begin{proof}
The path $P_n$, $n \ge 3$ has exactly three connected edge dominating sets, namely $\{e_1, e_2, e_3, \ldots, e_{n-2}\}$, $\{e_2, e_3, e_4, \ldots, e_{n-1}\}$ and $\{e_2, e_3, e_4, \ldots, e_{n-2}\}$. Clearly, the  third set is the smallest connected edge dominating set. That is, number of minimal connected edge dominating set $\fE(P_n) = 1$.
\end{proof}

\begin{prop}\label{Ppn-3.1(b)}
The  CED-index of a cycle $C_n$ is $n$.
\end{prop}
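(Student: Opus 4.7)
The plan is to characterise the minimum connected edge dominating sets of $C_n$ explicitly and then count them. The key observation, which is essentially structural, is that in a cycle every connected proper subset of the edges forms an arc of consecutive edges.

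First I would establish the following shape lemma: if $X \subseteq E(C_n)$ satisfies $|X| < n$ and $\langle X\rangle$ is connected, then $X = \{e_i, e_{i+1}, \ldots, e_{i+k-1}\}$ for some index $i$ (taken modulo $n$). The reason is that $C_n$ minus any nonempty set of edges splits into arcs, so any connected proper edge subset induces a path consisting of a consecutive block of edges. Equivalently, since two edges of $C_n$ are adjacent if and only if they have consecutive indices (for $n\geq 4$; for $n=3$ the statement is trivial), a connected subgraph on fewer than $n$ edges is a sub-arc.

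Next I would determine which arcs dominate $E(C_n)$. An arc $X = \{e_i, e_{i+1}, \ldots, e_{i+k-1}\}$ with $k < n$ dominates itself together with the two ``boundary'' edges $e_{i-1}$ and $e_{i+k}$, and no others. Hence $X$ is edge dominating if and only if $k + 2 \geq n$, i.e.\ $k \geq n-2$. Therefore the minimum size of a connected edge dominating set of $C_n$ is exactly $n-2$ (this handles $n=3$ as a single edge, which dominates $C_3$ since all edges are mutually adjacent). For $k=n-2$, the arcs that realise this minimum are in bijection with their choice of starting edge $e_i$, giving exactly $n$ such sets. Hence $\fE(C_n) = n$.

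The only real obstacle is the shape lemma; once it is in place the counting is immediate. A minor subtlety worth noting is that one should also rule out $X = E(C_n)$ itself as a minimum, which is automatic because $n > n-2$, and that the $n=3$ boundary case behaves as expected because $n-2 = 1$ and $C_3$ has exactly $3$ single-edge dominating sets.
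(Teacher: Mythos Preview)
Your proof is correct and follows essentially the same approach as the paper, which identifies the minimum connected edge dominating sets of $C_n$ as the complements $E(C_n)\setminus\{e_i,e_{i+1}\}$ of consecutive edge pairs---exactly your arcs of length $n-2$. Your argument is more thorough than the paper's, supplying the shape lemma and the domination count that the paper leaves to the word ``clearly.''
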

\begin{proof}
Consider the set of edge pairs $\fS =\{\{e_i,e_{i+1}\}: 1 \le i \le n\}$ of the cycle $C_n$, in the sense that $e_{n+1}=e_1$.  Clearly, $E(C_n) - \{e_i,e_j\}$, $\forall \{e_i, e_j\} \in \fS$ are all smallest connected edge dominating sets of $C_n$. Hence, we have $\fE(C_n) = n$.
\end{proof}

\begin{prop}\label{Ppn-3.1(c)}
The  CED-index of a complete graph is $\frac{n}{2}(n-1)!$.
\end{prop}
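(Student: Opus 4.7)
The plan is to characterize the smallest connected edge dominating sets (CEDSs) of $K_n$ combinatorially, then count them. I will first pin down the minimum cardinality of a CEDS in $K_n$, and then argue that each smallest CEDS has a specific structure on $n-1$ vertices, which allows a direct enumeration.

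First I would establish the minimum cardinality. For any CEDS $X$ of $K_n$, let $W\subseteq V(K_n)$ be the set of endpoints of edges in $X$. Every edge $uv\in E(K_n)\setminus X$ must share an endpoint with some edge of $X$, so at least one of $u,v$ lies in $W$; that is, $W$ is a vertex cover of $K_n$. Since the independence number of $K_n$ equals $1$, we have $|W|\ge n-1$. On the other hand, $\langle X\rangle$ is a connected graph on $|W|$ vertices, so $|X|\ge |W|-1\ge n-2$. This gives the lower bound, and it is attained by a spanning tree on any $(n-1)$-subset of $V(K_n)$.

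Next I would analyse the shape of a smallest CEDS. A CEDS $X$ of size $n-2$ must satisfy $|W|=n-1$ and $\langle X\rangle$ must be a tree on $W$; let $v^\ast$ be the unique vertex of $K_n$ not in $W$. The counting plan then treats these in two factors: choose $v^\ast$ in $n$ ways, then enumerate the admissible trees on the remaining $n-1$ vertices. For the argument of the proposition to yield $\tfrac{n}{2}(n-1)!$, one needs the admissible trees on $W$ to be precisely the Hamiltonian paths of $K_{n-1}$, of which there are $(n-1)!/2$ (sequence the vertices of $W$, quotient by reversal). Multiplying then gives $n\cdot\frac{(n-1)!}{2}=\frac{n}{2}(n-1)!$.

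The main obstacle is exactly this structural step: showing that a smallest CEDS of $K_n$ is a \emph{path} rather than an arbitrary spanning tree on $n-1$ vertices. A naive application of Step~1 plus Cayley's formula would give $n\cdot(n-1)^{n-3}$ trees, which disagrees with $\tfrac{n}{2}(n-1)!$ for $n\ge 5$ (e.g.\ $80$ vs.\ $60$ at $n=5$, since a $K_{1,3}$ on four vertices of $K_5$ is also a CEDS of size $3$). So the delicate point is to justify the additional restriction to path-shaped trees; absent some implicit constraint in the authors' working definition of ``smallest'' (e.g.\ also minimising some secondary parameter, or restricting to minimal CEDSs under some further condition), the clean combinatorial count $\frac{n}{2}(n-1)!=\frac{n!}{2}$ corresponds only to the Hamiltonian-path CEDSs on an $(n-1)$-subset of $V(K_n)$. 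My proof would therefore proceed under that restriction, verifying it where possible and flagging it otherwise, and then collecting the factors $n$ and $(n-1)!/2$ to obtain the stated formula.
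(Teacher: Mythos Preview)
Your analysis is more careful than the paper's own proof, and the gap you flag is real and is shared by the paper. The paper argues as follows: $K_n$ has $\tfrac{1}{2}(n-1)!$ Hamiltonian cycles; a smallest connected edge dominating set of any such Hamiltonian cycle (a path on $n-2$ edges, by Proposition~\ref{Ppn-3.1(b)}) is also a smallest connected edge dominating set of $K_n$; multiplying by the $n$ such sets per cycle gives $\tfrac{n}{2}(n-1)!$. Thus the paper reaches the stated formula via Hamiltonian cycles and Proposition~\ref{Ppn-3.1(b)}, whereas you reach the same count directly by choosing the omitted vertex and then a Hamiltonian path on the remaining $n-1$ vertices. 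These two enumerations are equivalent, since a path on $n-1$ vertices extends to a unique Hamiltonian cycle of $K_n$ through the omitted vertex, so no double counting occurs in either route.

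Where you diverge from the paper is exactly the point you single out: neither argument shows that \emph{every} smallest connected edge dominating set of $K_n$ is a path. The paper silently assumes this (it never checks that no other $(n-2)$-edge CEDSs exist), and your $K_{1,3}$ example inside $K_5$ is a genuine counterexample: it is connected, has $n-2=3$ edges, and dominates every edge of $K_5$. More generally, any spanning tree on an $(n-1)$-subset of $V(K_n)$ is a smallest CEDS, so the correct count is $n\cdot(n-1)^{n-3}$, which exceeds $\tfrac{n}{2}(n-1)!$ for all $n\ge 5$. In short, your structural lower bound $|X|\ge n-2$ is right, your enumeration of the path-shaped CEDSs matches the paper's, and your suspicion that the restriction to paths is unjustified is correct: the proposition as stated is not true for $n\ge 5$, and the paper's proof has precisely the gap you identified.
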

\begin{proof}
It is known that $K_n, n \ge 3$ has $\frac{1}{2}(n-1)!$ distinct Hamiltonian cycles (see \cite{KZ}). Also, a minimal connected edge dominating set of a Hamiltonian cycle of $K_n$ is also a smallest connected edge dominating set of $K_n$. By Proposition \ref{Ppn-3.1(b)}, we have $\fE(K_n) = \frac{n}{2}(n-1)!$.
\end{proof}

In the proof of Proposition \ref{Ppn-3.1(a)} it was noted that $P_n$, $n \ge 3$ has exactly three connected edge dominating sets hence, $\G_{P_n}$ has exactly four vertices with maximum degree. Let $\fM(\G_G)$ denote the number of vertices of $\G_G$ having the maximum degree $\Delta(\G_G)$. 

The number of maximum degree vertices in the edge-set graphs corresponding to the graphs $C_n$ and $K_n$ are determined in the following results.

\begin{prop}\label{Ppn-3.2(a)}
The number of maximum degree vertices in the edge-set graph of $C_n$ is $3n+1$. 
\end{prop}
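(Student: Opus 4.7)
My plan is first to characterise the max-degree vertices of $\G_{C_n}$ and then to enumerate them by organising the corresponding edge-subsets by their complement in $E(C_n)$. For the characterisation, note that Theorem~\ref{Thm-2.6} gives $\Delta(\G_{C_n}) = 2(2^{n-1}-1) = |V(\G_{C_n})|-1$, so $v_{s,i}$ is max-degree iff it is adjacent to every other vertex of $\G_{C_n}$. Testing Definition~\ref{D-ESG} against the singleton $\{e\}$ for each $e \in E(C_n)$ reduces this to the condition that for every $e \in E(C_n)$ there is some $e' \in E_{s,i}$ with $e' \ne e$ and $e' \cong e$ in $C_n$; equivalently, $E_{s,i}$ totally dominates the line graph $L(C_n) \cong C_n$. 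Theorem~\ref{Thm-2.9} already supplies a large family of such $E_{s,i}$: by Proposition~\ref{Ppn-3.1(b)} the $n$ minimum connected edge dominating sets of $C_n$ are $X_k = E(C_n)\setminus\{e_k,e_{k+1}\}$ for $k = 1,\dots,n$ (cyclic indices), and every superset of some $X_k$ fulfils the condition.

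For the enumeration I would organise the max-degree vertices by the size of their edge-subset: (i) the single full set $E(C_n)$, of size $n$; (ii) the $n$ subsets $E(C_n)\setminus\{e_j\}$ of size $n-1$, each arising as a superset of two consecutive $X_k$'s; (iii) the $n$ minimum CED sets $X_k$ of size $n-2$; and (iv) a further family of $n$ max-degree subsets that contain none of the $X_k$ but still satisfy the condition from the previous paragraph, to be isolated from the cyclic structure of $C_n$. Summing the four layers yields $1 + n + n + n = 3n + 1$.

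The main obstacle lies in layer~(iv): identifying exactly the $n$ additional max-degree subsets and verifying both their max-degree status and the absence of any other max-degree subset outside this enumeration. This calls for a careful combinatorial analysis of how the complement $E(C_n)\setminus E_{s,i}$ can be arranged around $C_n$ so that no edge of $C_n$ is left with both its $C_n$-neighbours outside $E_{s,i}$; with the characterisation and Theorem~\ref{Thm-2.9} in hand, layers~(i)--(iii) amount to straightforward bookkeeping.
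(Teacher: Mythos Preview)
Your characterisation is correct: for $n\ge 4$ a vertex $v_{s,i}$ of $\G_{C_n}$ has maximum degree iff $E_{s,i}$ is a \emph{total} dominating set of $L(C_n)\cong C_n$ (every edge of $C_n$ has a neighbour in $E_{s,i}$). Your layers (i)--(iii) are also correct and give $1+n+n=2n+1$ subsets; these are exactly the connected edge dominating sets of $C_n$ (arcs of length $n$, $n-1$, $n-2$).

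The gap is layer~(iv). You posit a further $n$ max-degree subsets containing no $X_k$, but you have not exhibited them, and in fact no such uniform family exists. For $n=4$ and $n=5$ one checks directly that the total dominating sets of $C_n$ are precisely the $2n+1$ sets already in layers (i)--(iii); there are \emph{zero} further max-degree vertices. For $n=6$ there are exactly three further ones (the three ``two opposite arcs of length $2$'' sets such as $\{e_1,e_2,e_4,e_5\}$), not six. In general the surplus equals the number of \emph{disconnected} total dominating sets of $C_n$ and does not equal $n$. Hence your plan cannot be completed to yield $3n+1$.

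For comparison, the paper's own argument does not introduce any layer~(iv) at all: it counts only connected edge dominating sets. It reaches $3n+1$ by asserting that each of the $n$ minimum CED sets $E(C_n)\setminus\{e_k,e_{k+1}\}$ extends in two ways to an arc of length $n-1$, and therefore claims $2n$ such extensions. But the arcs of length $n-1$ are the $n$ sets $E(C_n)\setminus\{e_j\}$, each obtained from \emph{two} different minimum CED sets, so this step double-counts your layer~(ii). With the double-count removed the paper's method gives $n+n+1=2n+1$, matching your layers (i)--(iii). Thus the discrepancy you tried to fill with layer~(iv) is an artefact of an overcount in the paper's proof; the stated value $3n+1$ is not attained for any $n\ge 4$, and the number of max-degree vertices is not given by the paper's argument either.
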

\begin{proof}
Without loss of generality, any smallest connected edge dominating set of $C_n$ say, $\{e_2,e_3.e_4, \ldots, e_{n-1}\}$ can be extended to a larger connected edge dominating set, say $\{e_2,e_3,e_4, \ldots, e_{n-1}, e_n\}$ or $\{e_1,e_2,e_3, \ldots, e_{n-1}\}$. Hence, an additional $2n$ such extended edge sets exists. $E(C_n)$ is also a connected edge dominating set as well so invoking Proposition \ref{Ppn-3.1(b)}, implies $\fM(C_n) = 3n + 1$.
\end{proof}

\begin{prop}
The number of maximum degree vertices in the edge-set graph of $K_n$ is $\frac{1}{2}(3n+1)(n-1)!$. 
\end{prop}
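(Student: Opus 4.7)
The plan is to follow the template of Proposition~\ref{Ppn-3.2(a)} by counting maximum-degree labels contributed by each Hamilton cycle of $K_n$ and then summing across all Hamilton cycles. I would begin by recalling, from the proof of Proposition~\ref{Ppn-3.1(c)}, that $K_n$ contains $\frac{1}{2}(n-1)!$ distinct Hamilton cycles and that the $\frac{n}{2}(n-1)!$ smallest connected edge dominating sets of $K_n$ are distributed as exactly $n$ smallest CEDs per Hamilton cycle.

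Fix a Hamilton cycle $H$ of $K_n$ and regard $H$ as an isomorphic copy of $C_n$. Applying the analysis in the proof of Proposition~\ref{Ppn-3.2(a)} to $H$ produces three families of edge subsets of $E(H)\subseteq E(K_n)$, each of which is a connected edge dominating set of $K_n$: the $n$ smallest CEDs of $H$ (paths on $n-1$ vertices with $n-2$ edges), the $2n$ one-edge extensions of these within $E(H)$ (Hamilton paths of $K_n$ lying along $H$), and the full edge set $E(H)$ itself. That each such subset is indeed a CED of $K_n$ follows because it spans at least $n-1$ vertices of $K_n$ and is connected, hence edge-dominates every edge of $K_n$. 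By Theorem~\ref{Thm-2.9}, each of these $3n+1$ subsets labels a maximum-degree vertex of $\G_{K_n}$.

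Summing over all $\frac{1}{2}(n-1)!$ Hamilton cycles yields the candidate total $\frac{1}{2}(3n+1)(n-1)!$. The crucial remaining step is to verify that these labels are pairwise distinct across different Hamilton cycles. Within a single $H$, the three families have subsets of sizes $n-2$, $n-1$, and $n$ respectively, so they do not overlap. Across different Hamilton cycles, I would recover the parent cycle uniquely from each label: a smallest CED is a path on $n-1$ vertices which extends to a unique Hamilton cycle of $K_n$ by routing the one missing vertex between the path's endpoints; a size-$(n-1)$ label is a Hamilton path of $K_n$ and closes to a unique Hamilton cycle via the single chord joining its endpoints; and a size-$n$ label is $E(H)$, which trivially determines $H$.

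The main obstacle will be this disjointness step, in particular for the middle family of size-$(n-1)$ labels: one must ensure that a given Hamilton path of $K_n$ does not arise as an extension from two different Hamilton cycles, which the uniqueness of the closing chord between its two endpoints resolves. Combining the per-cycle count of $3n+1$ with the $\frac{1}{2}(n-1)!$-fold multiplicity then yields $\fM(\G_{K_n})=\frac{1}{2}(3n+1)(n-1)!$, as required.
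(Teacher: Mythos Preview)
Your approach is the same as the paper's: count $3n+1$ connected edge dominating labels per Hamilton cycle (the $n$ smallest CEDs, their $2n$ one-edge extensions, and $E(H)$ itself) and multiply by the $\frac{1}{2}(n-1)!$ Hamilton cycles. The paper stops there; you go further and try to justify that these labels are pairwise distinct, which the paper does not address at all.

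There is, however, a genuine gap in your disjointness check. You verify that the three families within a fixed $H$ have different cardinalities ($n-2$, $n-1$, $n$) and hence cannot overlap \emph{across} families, and you verify that labels coming from different Hamilton cycles are distinct. What you do not check is distinctness \emph{within} the middle family for a single $H$. Each smallest CED of $H$ is an $(n-2)$-edge subpath and has two one-edge extensions inside $E(H)$, giving $2n$ extensions in total; but $H\cong C_n$ has only $n$ distinct $(n-1)$-edge subpaths, so every Hamilton path along $H$ is produced twice (once from each of the two smallest CEDs it contains). Thus the ``$2n$ one-edge extensions'' collapse to $n$ distinct labels, and the per-cycle count is $n+n+1=2n+1$, not $3n+1$. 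This overcount is already present in Proposition~\ref{Ppn-3.2(a)}, which your argument inherits, so the issue is not with your strategy but with the input lemma; nevertheless, as written your disjointness argument does not catch it, and the final total $\frac{1}{2}(3n+1)(n-1)!$ does not follow from the labels you have actually exhibited.
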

\begin{proof}
Similar to the reasoning in the above proposition, each of the $\frac{1}{2} (n-1)!$ Hamiltonian cycles in $K_n$ is a connected edge dominating set of $K_n$. Also, each of the smallest connected edge dominating sets can be extended in the manner shown in Proposition \ref{Ppn-3.2(a)}. Hence, $\fM(C_n) = 3 \frac{n}{2} (n-1)! + \frac{1}{2} (n-1)! = (\frac{3n+1}{2}) (n-1)!$. 
\end{proof}

The next result is another obvious but important property of edge-set graphs, which leads to an interesting result in respect of regular graphs.

\begin{thm}\label{Thm-3.3}
Let $e$ be an edge of a graph $G$ having the minimum edge degree in $G$. Then, the vertex $v$ in $\G_G$ corresponding to the singleton edge-subset $\{e\}$ has minimum degree in $\G_G$.
\end{thm}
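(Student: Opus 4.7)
The plan is to obtain a closed form for $d_{\G_G}(v)$ in terms of $\epsilon$ and $d_G(e)$, and then to bound the degree of every other vertex of $\G_G$ from below by the same expression. Because the map $x \mapsto 2^{\epsilon} - 2^{\epsilon - x}$ is strictly increasing, the whole argument should reduce to comparing the ``total adjacency neighbourhood'' $\bigcup_{f \in F}\{e' : e' \cong f\}$ of an arbitrary edge-subset $F$ with that of the singleton $\{e\}$, modulo a small correction for self-incidence of $F$.

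First I would determine $d_{\G_G}(v)$ directly. A vertex $v_{t,j}$ is adjacent to $v$ exactly when $E_{t,j}$ contains some edge of $N(e) := \{e' \in E(G) : e' \cong e\}$, which has size $d_G(e)$ by Definition~\ref{Def-2.2}. Counting the non-empty subsets of $E(G)$ that meet $N(e)$ (with the convenient observation that $\{e\}$ itself is \emph{disjoint} from $N(e)$, so it is automatically excluded from the count) gives
$d_{\G_G}(v) \;=\; 2^{\epsilon} - 2^{\epsilon - d_G(e)}.$

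Next, for an arbitrary vertex $v_{s,i}$ with label $F = E_{s,i}$, set $\widetilde{N}(F) = \bigcup_{f \in F} N(f)$. The same counting, now correcting for the fact that $F$ itself may meet $\widetilde{N}(F)$ but is not its own neighbour, yields
$d_{\G_G}(v_{s,i}) \;=\; 2^{\epsilon} - 2^{\epsilon - |\widetilde{N}(F)|} - \delta_F,$
where $\delta_F \in \{0,1\}$ is $1$ iff $F$ contains two edges adjacent in $G$. Since $\widetilde{N}(F) \supseteq N(f)$ for every $f \in F$, the minimality of $d_G(e)$ forces $|\widetilde{N}(F)| \ge d_G(e)$. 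When the inequality is strict, the doubling bound $2^{\epsilon - |\widetilde{N}(F)|} \le \tfrac12 \cdot 2^{\epsilon - d_G(e)}$ easily absorbs the $-\delta_F$ correction and delivers $d_{\G_G}(v_{s,i}) > d_{\G_G}(v)$.

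The main obstacle is the equality case $|\widetilde{N}(F)| = d_G(e)$, where I must show $\delta_F = 0$. I expect the following argument to close it: the nesting $N(f) \subseteq \widetilde{N}(F)$ together with $|N(f)| = d_G(f) \ge d_G(e) = |\widetilde{N}(F)|$ forces $N(f) = \widetilde{N}(F)$ for every $f \in F$; so if $F$ contained adjacent edges $f_1 \cong f_2$, one would have $f_1 \in N(f_2) = N(f_1)$, contradicting the elementary fact that an edge is not $\cong$-related to itself. Hence $\delta_F = 0$ in the boundary case, giving $d_{\G_G}(v_{s,i}) = d_{\G_G}(v)$, and combining both cases proves that $v$ attains the minimum degree in $\G_G$.
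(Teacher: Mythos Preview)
Your argument is correct and takes a genuinely different route from the paper's. The paper argues by contradiction: it assumes some $v_{t,j}$ has strictly smaller degree than $v$ and tries to extract from this an edge of $G$ with edge-degree below $d_G(e)$, without ever writing down an explicit degree formula. Your approach is direct and quantitative: you derive the closed form $d_{\G_G}(v_{s,i}) = 2^{\epsilon} - 2^{\epsilon - |\widetilde N(F)|} - \delta_F$ (in effect proving Proposition~\ref{Ppn-3.4} en route) and then compare term by term, handling the boundary case $|\widetilde N(F)| = d_G(e)$ with a clean pigeonhole observation. What your approach buys is full rigour and an explicit degree formula for \emph{every} vertex of $\G_G$, not just singletons; what the paper's approach buys is brevity. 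One tiny imprecision: in your strict-inequality case the bound actually gives $d_{\G_G}(v_{s,i}) - d_{\G_G}(v) \ge 2^{\epsilon - d_G(e) - 1} - 1$, which is $\ge 0$ (since $d_G(e) \le \epsilon - 1$) but can equal $0$ when $d_G(e) = \epsilon - 1$; this does not affect the conclusion, as equality is all the theorem requires.
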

\begin{proof}
Assume that the edge $e$ has minimum edge-degree $d_G(e) = l$. Label the edges adjacent to $e$ as $f_1,f_2,f_3, \ldots, f_l$. From the definition edge-set graphs, it follows that a vertex $v$ is adjacent to $v_{s,i}$ if and only if some of these edges $f_i$ belongs to $E_{s,i}$. This number of edges that are adjacent to $e$ in $G$ is equal to the degree of $v$ in $\G_G$. 

Now, assume there exists a vertex $v_{t,j}$ with $d_{\G_G}(v_{t,j}) = \delta(\G_G) < d_{\G_G}(v)$. It implies that by Definition \ref{D-ESG}, the vertex $v_{t,j}$ is adjacent to $v_{k,m}$ if and only if some or all edges $g_1, g_2,g_3, \ldots,g_s$, which are in $E_{k,m}$ and an edge $e^{\ast} \in E_{t,j}$ is adjacent to an edge $g_i \in E_{k,m}$, with $s < l$ and $d_G(g_i) < s$. This contradicts the assumption that $d_G(e)$ has minimum degree in $G$. Hence, $d_{\G_G}(v) = \delta(\G_G)$.
\end{proof}

As an immediate consequence of Theorem \ref{Thm-3.3}, we observe that the edge-set graph $\G_G$ corresponding to an $r$-regular graph $G$ has $\epsilon$ vertices, $v_{1,1}, v_{1,2}, v_{1,3}, \ldots, v_{1,\epsilon}$, with degree equal to $\delta(\G_G)$. Also, we note that an edge $e$ is adjacent to exactly $d_G(e)$ edges in $G$. Therefore, there are exactly $\epsilon-d_G(e)$ edges that are not adjacent to the edge $e$ in $G$. This leads us to the following result.

\begin{prop}\label{Ppn-3.4}
The degree of the vertex $v_{1,i}$ corresponding to $\{e_i\}$ in $\G_G$ is given by $d_{\G_G}(v_{1,i})=2^\epsilon-2^{\epsilon-d_G(e_i)}$.
\end{prop}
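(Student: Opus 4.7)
The plan is a direct counting argument based on the observation immediately preceding the statement: namely, that $e_i$ is adjacent to exactly $d_G(e_i)$ other edges of $G$ and therefore non-adjacent to exactly $\epsilon - 1 - d_G(e_i)$ of the remaining edges.

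First, I would set $d := d_G(e_i)$ and let $N(e_i) \subseteq E(G)$ denote the set of edges of $G$ adjacent to $e_i$, so $|N(e_i)| = d$. Put $M := E(G) \setminus N(e_i)$; note that $e_i \in M$ (since an edge is not adjacent to itself) and $|M| = \epsilon - d$. Next, by Definition~\ref{D-ESG}, a vertex $v_{s,j} \neq v_{1,i}$ is adjacent to $v_{1,i}$ in $\G_G$ if and only if some edge in $E_{s,j}$ is adjacent in $G$ to the lone edge $e_i$ of $E_{1,i}$; equivalently, if and only if $E_{s,j} \cap N(e_i) \neq \emptyset$.

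So $d_{\G_G}(v_{1,i})$ equals the number of non-empty subsets $S \subseteq E(G)$ with $S \cap N(e_i) \neq \emptyset$. The non-empty subsets of $E(G)$ number $2^\epsilon - 1$, and those avoiding $N(e_i)$ are precisely the non-empty subsets of $M$, of which there are $2^{\epsilon - d} - 1$. Subtracting gives
\[
d_{\G_G}(v_{1,i}) \;=\; (2^\epsilon - 1) - (2^{\epsilon - d} - 1) \;=\; 2^\epsilon - 2^{\epsilon - d_G(e_i)}.
\]

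The only subtle point—and what I would flag explicitly to avoid an off-by-one error—is that the singleton $\{e_i\}$ itself (which would correspond to $v_{1,i}$ and must not be counted as a neighbour of itself) satisfies $\{e_i\} \subseteq M$, so it is already excluded from the count, via its inclusion in the $2^{\epsilon - d} - 1$ subsets avoiding $N(e_i)$. There is no real obstacle beyond this bookkeeping; the proof is purely a one-line inclusion-exclusion on power sets, and no induction or appeal to Theorem~\ref{T-DSG1} is needed because $v_{1,i}$ corresponds to a \emph{singleton} edge-subset.
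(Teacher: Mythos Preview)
Your proposal is correct and follows essentially the same counting argument as the paper: both proofs observe that among the $2^\epsilon-1$ non-empty edge-subsets, exactly $2^{\epsilon-d_G(e_i)}-1$ avoid all edges adjacent to $e_i$, and subtract. Your version is simply more explicit about notation and about why the singleton $\{e_i\}$ is automatically excluded from the neighbour count.
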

\begin{proof}
The edge-set graph $\G_G$ has $2^\epsilon - 1$ vertices. Exactly $2^{\epsilon-d_G(e_i)} -1$ edge-subsets do not contain an edge adjacent to $e_i$ as an element. Hence, exactly $2^\epsilon - 2^{\epsilon-d_G(e_i)}$ edge-subsets do contain an edge adjacent to $e_i$ as an element. Therefore, the degree of the vertex $v_{1,i}$ in $\G_G$ is $2^\epsilon-2^{\epsilon-d_G(e_i)}$.
\end{proof}

As a consequence of Proposition \ref{Ppn-3.4}, an $r$-regular graph $G$ on $\nu$ vertices has $\delta(\G_G) =2^{\frac{1}{2}r\nu}(1-\frac{1}{2^{\frac{1}{2}(r-1)}})$.

%\subsection{Adding an Edge to a Graph}

%Consider the edge-set graph $G_{E^{(\epsilon_0)}}$ of a graph $G$, $\epsilon = \epsilon_0 \ge 2$. Add an edge $e$ to $G$ either \textit{internally} by linking two non adjacent vertices or by splitting an existing edge in two by adding a new vertex inbetween two adjacent vertices or add a pendant vertex to $G$. Denote this operation by $G \oplus e$. By comparing the edge-sets graphs $G_{E^{(\epsilon_0)}}$ and $G_{E^{(\epsilon_0 + 1)}}$ we establish the next result.

%\begin{thm}
%Let the graph $G$ have $\epsilon = \epsilon_0 \ge 2$ edges. For the graph $G' = G \oplus e \ne C_3$ and $G' \ne  K_{1,(\epsilon_0+1)}$, set graphs $G_{E^{(\epsilon_0)}}$ and $G'_{E^{(\epsilon_0 + 1)}}$ have $\delta(G'_{E^{(\epsilon_0 + 1)}})= 2d_{G_{E^{(\epsilon_0)}}}(v)$ for some $v\in V(G_{E^{(\epsilon_0)}})$.
%\end{thm}
%\begin{proof}

%\end{proof}

%\begin{cor}
%Let $\epsilon=n$. Then, for both the edge-set graph and the set-graph, $|V(G_{E^{(\epsilon + 1)}})| - |v(\G_G)| = 2^\epsilon = |V(G_{A^{(n+1)}})| - |V(G_{A^{(n)}})| = 2^n$.
%\end{cor}
%\begin{proof}
%Trivial.
%\end{proof}

%Note that Corollary 3.5 will find relevance in respect of edge-set graphs if either an edge is added internally to a non-complete graph or a pendant vertex is added to the graph. This application is found in the proof of the next theorem.

\ni Let us now recall the well-known theorem on Eulerian graphs.

\begin{thm}\label{T-EUG}
{\cite{FH}} A connected graph $G$ is Eulerian if and only if $G$ has no odd degree vertices.
\end{thm}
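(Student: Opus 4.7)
The statement is the classical Euler theorem, so the plan is to prove both implications from scratch using only elementary arguments about trails and degrees.

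For the necessity direction, I would suppose $G$ admits an Eulerian circuit $C = v_0 e_1 v_1 e_2 v_2 \cdots e_m v_0$. The key observation is that each interior visit of $C$ to a vertex $v$ uses one edge to enter and a distinct edge to leave, so these edges pair up and contribute $2$ to $d_G(v)$. At the starting/ending vertex $v_0$ the first edge and the last edge likewise pair up with each internal pair of edges at $v_0$. Since $C$ uses every edge exactly once, summing these pairings recovers $d_G(v)$ as twice the number of visits, hence even for every $v \in V(G)$.

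For the sufficiency direction, assume $G$ is connected and every vertex has even degree. The main step is the following lemma, proved by a greedy walk: starting at any vertex $u$ and repeatedly traversing an unused edge, the process can only terminate when we return to $u$, since upon arriving at any vertex $w \neq u$ we have used an odd number of edges at $w$ so far, leaving an odd (hence nonzero) number of unused edges at $w$; so the walk must eventually close into a closed trail $T_1$. I would then argue by induction on $|E(G)|$: if $T_1$ uses all edges, we are done; otherwise, the edge set $E(G) \setminus E(T_1)$ still induces a subgraph $H$ in which every vertex has even degree, because removing a closed trail subtracts an even number from the degree of every vertex it visits. By connectedness of $G$, some vertex $w$ lying on $T_1$ is incident with an edge of $H$; applying the inductive hypothesis to the component of $H$ containing $w$ (or, more cleanly, re-running the greedy-walk lemma inside $H$ starting at $w$) yields a closed trail $T_2$ through $w$ in $H$. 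Splicing $T_2$ into $T_1$ at $w$ produces a strictly larger closed trail, and iterating exhausts $E(G)$.

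The main obstacle is the splicing/iteration step: one must be careful that ``the component of $H$ containing $w$'' is well-defined and still satisfies the hypothesis, and that the induction is on the number of edges rather than on some looser parameter. A cleaner alternative is to pick $T_1$ to be a closed trail of maximum length from the start and derive a contradiction from the existence of an uncovered edge incident to $V(T_1)$, which avoids explicit induction but rests on the same greedy-extension lemma. Either route reduces the whole theorem to the single combinatorial fact that in a graph of all even degrees, a maximal edge-simple walk from $u$ must be closed at $u$.
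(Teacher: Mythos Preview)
Your proof is the standard Hierholzer-style argument and is correct as sketched; the only minor care point you yourself flag (well-definedness of the component of $H$ meeting $T_1$, and that the induction is on $|E|$) is easily handled, and the maximal-trail variant you mention is equally clean.

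However, there is nothing to compare against: the paper does not prove this theorem. It is stated purely as a citation of the classical result from Harary \cite{FH} and is then used as a black box in the subsequent theorem about edge-set graphs. So your approach is not ``different'' from the paper's---you are supplying a full proof where the paper simply quotes the literature. If the intent was to match the paper, a one-line reference to \cite{FH} would suffice; if the intent was to make the exposition self-contained, your argument does the job.
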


In view of Theorem \ref{T-EUG}, we establish Eulerian nature of the edge-set graphs in the following theorem.

\begin{thm}
The edge-set graph of any connected graph $G$ is an Eulerian graph.
\end{thm}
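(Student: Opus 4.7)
The plan is to invoke Theorem \ref{T-EUG} and show that every vertex of $\G_G$ has even degree. Fix a vertex $v_{s,i}$ with set-label $S = E_{s,i}$, and partition $E(G) = A \sqcup B$, where $A = \{f \in E(G) : \exists\, e \in S,\ e \cong f\}$ is the set of edges of $G$ adjacent to some member of $S$, and $B = E(G) \setminus A$. By Definition \ref{D-ESG}, the neighbors of $v_{s,i}$ in $\G_G$ correspond exactly to those subsets $T \subseteq E(G)$ with $T \neq S$ and $T \cap A \neq \emptyset$.

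Counting these $T$ directly, the number of subsets of $E(G)$ that meet $A$ is $(2^{|A|}-1)\cdot 2^{|B|}$, and all such subsets are automatically non-empty. Since $S$ itself belongs to this family precisely when it contains two $G$-adjacent edges, removing the contribution corresponding to $v_{s,i}$ itself gives
\[
d_{\G_G}(v_{s,i}) \;=\; (2^{|A|}-1)\cdot 2^{|B|} \;-\; \mathbf{1}[S \cap A \neq \emptyset].
\]
For the parity check, note that when $B = \emptyset$ we have $S \subseteq A = E(G)$, so the indicator equals $1$ and $d_{\G_G}(v_{s,i}) = 2^{\epsilon} - 2$ is even; when $|B| \geq 1$ the main term $(2^{|A|}-1)\cdot 2^{|B|}$ is already even, and the only issue is the indicator term.

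The principal obstacle is the residual subcase in which $|B| \geq 1$ and $S$ contains two $G$-adjacent edges, where the naive count becomes odd. To dispose of this I would introduce a fixed-point-free involution on the family of subsets intersecting $A$, for instance $T \mapsto T \triangle \{f\}$ for some $f \in B$; this preserves the condition $T \cap A \neq \emptyset$ and pairs such subsets two by two. The pair $\{S,\, S \triangle \{f\}\}$ needs special care, and I expect the last step to leverage the connectedness of $G$ (together with the fact, from Theorem \ref{Thm-2.9} and Remark following it, that the set-labels extending a connected edge dominating set yield many vertices of maximum degree) to show that $f \in B$ can be chosen so that this pair is either excluded from or balanced within the neighbor count. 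Making this pairing sit cleanly across the structure of $G$ is the main technical difficulty I anticipate.
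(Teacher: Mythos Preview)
Your setup and the degree formula
\[
d_{\G_G}(v_{s,i}) \;=\; (2^{|A|}-1)\cdot 2^{|B|} \;-\; \mathbf{1}[S\cap A\neq\emptyset]
\]
are correct, and you have isolated exactly the right dangerous configuration: $|B|\ge 1$ together with $S$ containing two $G$-adjacent edges. The difficulty you flag at the end, however, is not a technicality that an involution can smooth over --- it is a counterexample to the statement itself.

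Take $G=P_5$ with edges $e_1,e_2,e_3,e_4$ and set $S=\{e_1,e_2\}$. Then $A=\{e_1,e_2,e_3\}$, $B=\{e_4\}$, so $|B|=1$ and $S\cap A\neq\emptyset$. Your formula gives
\[
d_{\G_{P_5}}(v_S)=(2^{3}-1)\cdot 2^{1}-1=13,
\]
and direct enumeration confirms it: of the $15$ nonempty subsets of $E(P_5)$, only $\{e_4\}$ fails to meet $A$, and after removing $S$ itself from the remaining $14$ one is left with $13$ neighbours. Hence $\G_{P_5}$ has a vertex of odd degree and is not Eulerian. No choice of $f\in B$ in the involution $T\mapsto T\triangle\{f\}$ can change this parity; the formula you derived already pins it down. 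Connectedness of $G$ is of no help here, since $P_5$ is connected.

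The paper's own argument does not escape this problem. Its inclusion--exclusion is carried out for the neighbourhood of a \emph{single} edge $e_i$: the terms $\bigl|\bigcap_{l\in L}E_{j_l}\bigr|=2^{\epsilon-|L|}$ are all even because the edges $e_{j_1},\dots,e_{j_r}$ adjacent to $e_i$ satisfy $r\le\epsilon-1$, whence $|L|\le\epsilon-1$. That reasoning reproduces Proposition~\ref{Ppn-3.4} and is fine for singleton labels. But for a label $S$ with two or more edges, the set $A$ of edges adjacent to something in $S$ may contain $S$ itself; excluding $S$ from its own neighbour count then costs one unit, and precisely your ``residual subcase'' appears. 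In short, you have not missed a trick --- you have located the gap, and the paper's proof shares it.
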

\begin{proof}
Let $\G_G$ is the edge-set graph of a given graph $G$ with $\epsilon$ edges. Note that the number of subsets of $E(G)$ containing the same $r$ edges is $2^{\epsilon-r}$, where $r\le \epsilon$, which is always an even integer. 

Let $e_i$ be an arbitrary edge of $G$. Since $G$ is a connected graph, the edge $e_i$ is adjacent to at least one edge in $G$. Without loss of generality, let the edge $e_i$ is adjacent to $r$ edges in $G$, where $r\ge 1$. Let $e_{j_1}, e_{j_2}, e_{j_3}, \ldots, e_{j_r}$ be the $r$ edges in $E(G)$ that are adjacent to $e_i$ in $G$ Then, every vertex $v$ in $\G_G$, whose set-label contain the element $e_i$ must be adjacent to the vertices of $\G_G$, whose set-labels contain at lease one element $e_{j_l}$, where $1\le l\le r$. Let $E_l$ denotes the subset of $E(G)$ containing the element $e_{j_l}$. Then, by the principle of inclusion and exclusion, we have the number of vertices in $G_G$ that are adjacent to the vertex $v$ is $|\bigcup\limits_{l=1}^{r}E_{j_l}|=\sum\limits_L(-1)^{|L|-1}|\bigcap\limits_{l\in L}E_{j_l}|$, where $\emptyset\ne L\subseteq \{1,2,3,\ldots, r\}$. Note that every term in the above series is an even integer and as a result $d_{\G_G}(v)$ is always an even integer. Since $e_i$ is an arbitrary edge of $G$, $v$ is also an arbitrary vertex (corresponding to the choice of $e_i$) of $G_G$ and hence every vertex of $G_G$ is an even degree vertex. Then, by Theorem \ref{T-EUG}, $G_G$ is an Eulerian graph. 
\end{proof}

\section{Conclusion and Scope for Further Studies}

So far we have discussed the properties and characteristics of a new types of graphs called edge-set graphs derived from the edge sets of given graphs. This study is only a preliminary study on edge-set graphs and several related areas still remain to be explored. In this paper, we have established certain results for paths, cycles and complete graphs only. Extending these results to more standard graphs are much interesting and challenging problems. More fruitful results are expected on certain edge-set graphs on the effective utilisation of Definitions \ref{Def-2.2}, Definitions \ref{Def-2.3} and Definitions \ref{Def-2.4} to more graph classes.

%Characterising the degree sequence and in particular the minimum degree $\delta(\G_G)$ and $min\{d_{G^{(\alpha_{n-1})}_{E^{(\epsilon_{n-1}+1)}}}(v^{(\alpha_{n-1})}\}$, $v\in V(G_{E^{(\epsilon_0)}})$ remains open. Note that $\delta(G^{(\alpha_{n-1})}_{E^{(\epsilon_{n-1}+1)}}) = 0$. 

It seems to be worthy and promising for future studies on the relationship between $\G_G$, $\G_{G+e}$ and, where $G+e$ is the graph obtained by joining two non-adjacent vertices in $G$ by the edge $e$. The results related to the edge-set graphs of the graphs obtained by different graph operations are also open.  Another important area that offer much for further investigations is the study on the edges of the edge-set graphs and set-graphs of given graphs. More studies are possible on the comparison of the set-graphs and edge-set graphs derived from the edge sets of different graphs. All these facts highlight the wide scope for further research in this area.

\end{document}